\begin{document}
\baselineskip = 16pt

\newcommand \ZZ {{\mathbb Z}}
\newcommand \NN {{\mathbb N}}
\newcommand \RR {{\mathbb R}}
\newcommand \PR {{\mathbb P}}
\newcommand \AF {{\mathbb A}}
\newcommand \GG {{\mathbb G}}
\newcommand \QQ {{\mathbb Q}}
\newcommand \CC {{\mathbb C}}
\newcommand \bcA {{\mathscr A}}
\newcommand \bcC {{\mathscr C}}
\newcommand \bcD {{\mathscr D}}
\newcommand \bcF {{\mathscr F}}
\newcommand \bcG {{\mathscr G}}
\newcommand \bcH {{\mathscr H}}
\newcommand \bcM {{\mathscr M}}
\newcommand \bcI {{\mathscr I}}
\newcommand \bcJ {{\mathscr J}}
\newcommand \bcK {{\mathscr K}}
\newcommand \bcL {{\mathscr L}}
\newcommand \bcO {{\mathscr O}}
\newcommand \bcP {{\mathscr P}}
\newcommand \bcQ {{\mathscr Q}}
\newcommand \bcR {{\mathscr R}}
\newcommand \bcS {{\mathscr S}}
\newcommand \bcV {{\mathscr V}}
\newcommand \bcU {{\mathscr U}}
\newcommand \bcW {{\mathscr W}}
\newcommand \bcX {{\mathscr X}}
\newcommand \bcY {{\mathscr Y}}
\newcommand \bcZ {{\mathscr Z}}
\newcommand \goa {{\mathfrak a}}
\newcommand \gob {{\mathfrak b}}
\newcommand \goc {{\mathfrak c}}
\newcommand \gom {{\mathfrak m}}
\newcommand \gon {{\mathfrak n}}
\newcommand \gop {{\mathfrak p}}
\newcommand \goq {{\mathfrak q}}
\newcommand \goQ {{\mathfrak Q}}
\newcommand \goP {{\mathfrak P}}
\newcommand \goM {{\mathfrak M}}
\newcommand \goN {{\mathfrak N}}
\newcommand \uno {{\mathbbm 1}}
\newcommand \Le {{\mathbbm L}}
\newcommand \Spec {{\rm {Spec}}}
\newcommand \Gr {{\rm {Gr}}}
\newcommand \Pic {{\rm {Pic}}}
\newcommand \Jac {{{J}}}
\newcommand \Alb {{\rm {Alb}}}
\newcommand \Corr {{Corr}}
\newcommand \Chow {{\mathscr C}}
\newcommand \Sym {{\rm {Sym}}}
\newcommand \Prym {{\rm {Prym}}}
\newcommand \cha {{\rm {char}}}
\newcommand \eff {{\rm {eff}}}
\newcommand \tr {{\rm {tr}}}
\newcommand \Tr {{\rm {Tr}}}
\newcommand \pr {{\rm {pr}}}
\newcommand \ev {{\it {ev}}}
\newcommand \cl {{\rm {cl}}}
\newcommand \interior {{\rm {Int}}}
\newcommand \sep {{\rm {sep}}}
\newcommand \td {{\rm {tdeg}}}
\newcommand \alg {{\rm {alg}}}
\newcommand \im {{\rm im}}
\newcommand \gr {{\rm {gr}}}
\newcommand \op {{\rm op}}
\newcommand \Hom {{\rm Hom}}
\newcommand \Hilb {{\rm Hilb}}
\newcommand \Sch {{\mathscr S\! }{\it ch}}
\newcommand \cHilb {{\mathscr H\! }{\it ilb}}
\newcommand \cHom {{\mathscr H\! }{\it om}}
\newcommand \colim {{{\rm colim}\, }} 
\newcommand \End {{\rm {End}}}
\newcommand \coker {{\rm {coker}}}
\newcommand \id {{\rm {id}}}
\newcommand \van {{\rm {van}}}
\newcommand \spc {{\rm {sp}}}
\newcommand \Ob {{\rm Ob}}
\newcommand \Aut {{\rm Aut}}
\newcommand \cor {{\rm {cor}}}
\newcommand \Cor {{\it {Corr}}}
\newcommand \res {{\rm {res}}}
\newcommand \red {{\rm{red}}}
\newcommand \Gal {{\rm {Gal}}}
\newcommand \PGL {{\rm {PGL}}}
\newcommand \Bl {{\rm {Bl}}}
\newcommand \Sing {{\rm {Sing}}}
\newcommand \spn {{\rm {span}}}
\newcommand \Nm {{\rm {Nm}}}
\newcommand \inv {{\rm {inv}}}
\newcommand \codim {{\rm {codim}}}
\newcommand \Div{{\rm{Div}}}
\newcommand \CH{{\rm{CH}}}
\newcommand \sg {{\Sigma }}
\newcommand \DM {{\sf DM}}
\newcommand \Gm {{{\mathbb G}_{\rm m}}}
\newcommand \tame {\rm {tame }}
\newcommand \znak {{\natural }}
\newcommand \lra {\longrightarrow}
\newcommand \hra {\hookrightarrow}
\newcommand \rra {\rightrightarrows}
\newcommand \ord {{\rm {ord}}}
\newcommand \Rat {{\mathscr Rat}}
\newcommand \rd {{\rm {red}}}
\newcommand \bSpec {{\bf {Spec}}}
\newcommand \Proj {{\rm {Proj}}}
\newcommand \pdiv {{\rm {div}}}
\newcommand \NS{{\rm{NS}}}
\newcommand \wt {\widetilde }
\newcommand \ac {\acute }
\newcommand \ch {\check }
\newcommand \ol {\overline }
\newcommand \Th {\Theta}
\newcommand \cAb {{\mathscr A\! }{\it b}}

\newenvironment{pf}{\par\noindent{\em Proof}.}{\hfill\framebox(6,6)
\par\medskip}

\newtheorem{theorem}[subsection]{Theorem}
\newtheorem{conjecture}[subsection]{Conjecture}
\newtheorem{proposition}[subsection]{Proposition}
\newtheorem{lemma}[subsection]{Lemma}
\newtheorem{remark}[subsection]{Remark}
\newtheorem{remarks}[subsection]{Remarks}
\newtheorem{definition}[subsection]{Definition}
\newtheorem{corollary}[subsection]{Corollary}
\newtheorem{example}[subsection]{Example}
\newtheorem{examples}[subsection]{examples}

\title{Representability of Chow groups of codimension three cycles}
\author{Kalyan Banerjee}

\address{Harish Chandra Research Institute, India}

\email{banerjeekalyan@hri.res.in}

\begin{abstract}
In this note we are going to prove that if we have a fibration of smooth projective varieties $X\to S$ over a surface $S$ such that $X$ is of dimension four and that the geometric generic fiber has finite dimensional motive and the first \'etale cohomology of the geometric generic fiber with respect to $\QQ_l$ coefficients is zero and the second \'etale cohomology is spanned by divisors, then $A^3(X)$ (codimension three algebraically trivial cycles modulo rational equivalence) is dominated by finitely many copies of $A_0(S)$. Meaning that there exists finitely many correspondences $\Gamma_i$ on $S\times X$, such that $\sum_i \Gamma_i$ is surjective from $\oplus A^2(S)$ to $A^3(X)$.
\end{abstract}

\maketitle

\section{Introduction}

The representability problem in the theory of algebraic cycles is a very interesting and a fundamental problem. Precisely it means the following. Let $X$ be a smooth projective algebraic variety of dimension $n$ over an algebraically closed ground field $k$ of characteristic zero. Consider the group of algebraic cycles of codimension $i$ which are algebraically trivial modulo rational equivalence. Denote this group by $A^i(X)$. Then the question is, when there exists a smooth projective curve $C$ defined over $k$ and a correspondence $\Gamma$ on $C\times X$ such that $\Gamma_*$ from $J(C)$, the Jacobian variety of $C$, to $A^i(X)$ is onto. The case when we consider $A^n(X)$, this representability question is equivalent to the fact that $A^n(X)$ is isomorphic to the albanese variety of $X$, which is also equivalent to the surjectivity of the natural map from some high degree symmetric power of $X$ to $A^n(X)$. It is a conjecture due to Bloch that when we consider a smooth projective surface $S$ with geometric genus zero then the group $A^2(S)$ is representable. On the other hand, Mumford \cite{M} proved that when the geometric genus of the surface is greater than zero then the group $A^2(S)$ is not representable. Bloch's conjecture for surfaces with geometric genus equal to zero has been proved in certain cases, for all surfaces not of general type \cite{BKL} and some examples of surfaces of general type \cite{V},\cite{VC}.

In \cite{VG}[Theorem 1] it has been proved that when we have a smooth projective threefold $X$ fibered into surfaces over a smooth projective curve $C$, such that the geometric generic fiber has finite dimensional motive, has first \'etale cohomology with $\QQ_l$ is zero and the second \'etale cohomology with $\QQ_l$ is spanned by divisors, then the group $A^2(X)$ is representable in the sense that there exists finitely many correspondences $\Gamma_i$ on $C\times X$, such that $\oplus_i \Gamma_{i*}$ from $\oplus_i J(C)$  to $A^2(X)$ is onto. Then as an application, it has been proved that the $A^2$ of a del Pezzo fibration over a smooth projective curve is representable.

In this paper our aim is to extend the result of \cite{VG} to the case when $X$ is of dimension $4$ and it is fibered into surfaces over a smooth projective surface, such that the geometric generic fiber satisfies the property as above. Then we prove that $A^3(X)$ is representable up to dimension $2$. Precisely it means that there exists finitely many correspondences $\Gamma_i$ on $S\times X$ such that $\oplus_i \Gamma_{i*}$ from $A^2(S)$ to $A^3(X)$ is onto. In other words we prove that $A^3(X)$ is representable by $A^2$ of smooth projective surfaces.
\smallskip

So the main theorem is :

\begin{theorem}
Let $X$ be a smooth projective fourfold birational to a fourfold $X'$ fibered over a surface $S$. Assume moreover that the geometric generic fiber of the fibration $X'\to S$ satisfies the following:

(i) The motive of it is finite dimensional.
(ii) First \'etale cohomology of it is trivial with respect to $\QQ_l$ coefficients.
(iii) The second \'etale cohomology is spanned by divisors on it.

Then the group $A^3(X)$ is representable up to dimension two.
\end{theorem}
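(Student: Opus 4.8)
The plan is to reduce the statement to the fibered model $X'$ and then to exploit the triviality of the group of zero-cycles on the geometric generic fibre, propagated over the base by a decomposition of the diagonal. First I would record that the conclusion is a birational invariant in the relevant range: a birational map of smooth projective fourfolds is resolved by a sequence of blow-ups along smooth centres of dimension at most three, and the induced change in $A^3=A_1$ is governed by the Chow groups of these centres and of the projective bundles over them. As these contributions are pushed forward from varieties of dimension at most three, one checks that the property of being dominated by $A^2$ of smooth projective surfaces is preserved; hence I may assume $X=X'$ carries a surface fibration $f\colon X\to S$.

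Next I would analyse the generic fibre. Writing $K=k(S)$ and $\eta$ for the generic point of $S$, hypotheses (i)--(iii) say that the geometric generic fibre $X_{\bar\eta}$ is a surface with finite dimensional motive, with $H^1(X_{\bar\eta},\QQ_l)=0$ and with $H^2(X_{\bar\eta},\QQ_l)$ spanned by divisor classes. By the Bloch conjecture for surfaces with finite dimensional motive (Kimura--O'Sullivan finite dimensionality together with Guletskii--Pedrini), these conditions force the transcendental motive to vanish, so that $A_0(X_{\bar\eta})=0$; equivalently $\CH_0$ of the geometric generic fibre is trivial. By Bloch--Srinivas this yields, after a finite extension of $K$, a decomposition of the diagonal of $X_\eta$ of the form $m\,\Delta_{X_\eta}=\Gamma_1+\Gamma_2$, where $\Gamma_2$ is supported on $X_\eta\times\{\mathrm{pt}\}$ and $\Gamma_1$ on $D\times X_\eta$ for a curve $D\subset X_\eta$. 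Spreading out over a dense open $U\subseteq S$ and taking closures, I obtain a congruence $m\,\Delta_X\equiv\Gamma_1+\Gamma_2$ modulo cycles supported over the discriminant curve $Z:=S\setminus U$, where now $\Gamma_1$ is supported on $\mathcal D\times_S X$ for a relative divisor $\mathcal D\subset X$ (a threefold fibered into curves over $S$) and $\Gamma_2$ on $X\times_S\Sigma$ for a multisection surface $\Sigma$.

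I would then act by this decomposition on a class $\beta\in A^3(X)=A_1(X)$. The term $\Gamma_2$ acts fibrewise through zero-cycles, hence annihilates $1$-cycles up to contributions over $Z$; the term $\Gamma_1$ factors through restriction to the threefold $\mathcal D$, and since $1$-cycles on $\mathcal D$ cannot dominate the surface $S$, a further localization shows that $\Gamma_{1*}\beta$ is, modulo cycles over $Z$, a combination of fibre classes $[\mathcal D_s]$ parametrized by $s\in S$. The assignment $[s]\mapsto[\mathcal D_s]$ is exactly a correspondence $\Gamma_{\mathcal D}\in\CH^3(S\times X)$ whose action realizes this part as $\Gamma_{\mathcal D *}$ applied to a class in $A^2(S)=A_0(S)$. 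It remains to treat the contributions supported over $Z$: there $X_Z:=f^{-1}(Z)$ is a threefold fibered into surfaces over the curve $Z$ whose geometric generic fibre again has trivial $\CH_0$ (the diagonal decomposition propagates to the generic point of every component of $Z$ meeting $U$), so \cite{VG}[Theorem 1] applies and shows $A^2(X_Z)$ is representable by Jacobians of the normalizations of the components of $Z$.

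The main obstacle, and the heart of the argument, is the final assembly: I must show that the finitely many Jacobian contributions coming from the discriminant, once pushed into $A_1(X)$, are themselves captured by $A^2(S)$ through correspondences on $S\times X$, and that only finitely many such correspondences are needed. Finiteness should follow from a Noetherian argument — the discriminant $Z$ has finitely many components, \cite{VG} supplies finitely many correspondences for each, and the generic part is carried by the single $\Gamma_{\mathcal D}$. The genuinely delicate point is the change of source from Jacobians of curves to $A^2(S)$: abstractly a correspondence factors through the Albanese, so one cannot hope to dominate an arbitrary Jacobian by $A_0(S)$. The resolution must be that the image in $A_1(X)$ of the bad-fibre cycles consists, up to rational equivalence in the ambient fourfold, of fibre-type classes parametrized by $Z\subset S$, so that this image factors through $A_0(S)$ via the inclusion $Z\hookrightarrow S$ rather than through the Jacobians themselves. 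Making this collapse precise — showing that algebraic triviality in $X$ concentrates the Jacobian contributions onto classes pulled back from zero-cycles on $S$ — is where the fibration hypotheses and the finite dimensionality of the fibre motive must be used most carefully.
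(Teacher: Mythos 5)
Your reduction to the fibered model and your use of finite dimensionality to get Bloch's conjecture for the geometric generic fibre are fine, but the central step does not close. The Bloch--Srinivas decomposition $m\,\Delta_{X_\eta}=\Gamma_1+\Gamma_2$ with $\Gamma_1$ supported on $D\times X_\eta$ is too coarse for the purpose at hand. When you spread out and let $\Gamma_1$ act on a $1$-cycle $\beta$ of the fourfold, the action factors through the refined Gysin restriction of $\beta$ to the threefold $\mathcal D$ (a zero-cycle on $\mathcal D$) followed by a correspondence $\CH_0(\mathcal D)\to \CH_1(X)$; it is \emph{not} ``a combination of fibre classes $[\mathcal D_s]$'', because $\Gamma_1$ is an arbitrary cycle of the right dimension supported on $\mathcal D\times_S X$, not a multiple of $\mathcal D\times_S X$ itself. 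Thus you have traded control of $A^3(X)$ for control of $A_0(\mathcal D)$, where $\mathcal D\to S$ is a threefold fibered in curves over $S$; that group carries the Mordell--Weil group of the relative Jacobian of $\mathcal D/S$ and is not obviously dominated by $A^2$ of surfaces and Jacobians of curves, so the argument is circular at this point. (The claim that $\Gamma_2$ ``annihilates $1$-cycles'' is also off: $\Gamma_{2*}\beta$ lands in $\mathrm{Pic}$ of the multisection $\Sigma$, which is harmless but nonzero.) What is actually needed, and what the paper's proof supplies, is the finer statement that over a suitable open $U\subseteq S$ the middle relative Chow--K\"unneth projector satisfies $\pi_2=\sum_{i}[W_i\times_U W_i']+\rho$ with the $W_i,W_i'$ spreads of the divisors from hypothesis (iii) and $\rho$ \emph{nilpotent} (numerically trivial plus Kimura nilpotence). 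The product structure $W_i\times_U W_i'$ is the whole point: one computes $[W_i\times_U W_i']_*(y)=f^*\bigl(f_*(y\cdot W_i)\bigr)\cdot W_i'=\theta_{i*}(a_i)$ with $a_i=f_*(y\cdot W_i)\in A^2(U)$, so the action visibly factors through zero-cycles on the base surface, and nilpotence of $\rho$ then gives $y=-\tfrac1n\theta_*(c_n)$. Bloch--Srinivas alone never produces this product structure.

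Two smaller points. First, the ``genuinely delicate point'' you flag at the end --- converting Jacobian contributions from the discriminant into $A^2(S)$ --- is not actually an obstacle: the paper's notion of representability up to dimension two explicitly allows finitely many Jacobians of curves \emph{and} finitely many $A^2$ of surfaces as sources, so the bad-fibre contributions are absorbed directly by \cite{VG}[Theorem 1] together with the splitting lemma for finite-dimensional direct summands; no collapse onto $A_0(S)$ is required. Second, before invoking the relative projectors one must arrange a section of $Y\to U$ (after a finite base change and further shrinking of $U$) in order to define $\pi_0$ and $\pi_4$ and to discard the support of $E\cdot E$; this normalization step is absent from your outline but is needed to make the decomposition $\Delta_{Y/U}=\pi_0+\pi_2+\pi_4$ available.
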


\smallskip

The underlying  technique to prove the main theorem is same as in the proof of Theorem 1, \cite{VG}, but the only non-trivial step is to excise a curve from the base of the fibration and to prove that the representability of $A^3(X)$ will follow from representability of $A^3(X_U)$, where $U=S\setminus C$, that is the part we remove has representable $A^2$.

The theorem is interesting from the following view point: The representability of $A^3$ up to dimension $2$ is a birational invariant of smooth projective fourfolds that holds for rational varieties. Hence one motivation for the above mentioned theorem is to the rationality problem, where we explain the vanishing of this obstruction to rationality for smooth, projective fourfolds fibered over surfaces. In one case of interest to the rationality problem, when $X$ is a cubic fourfold, it should be noted that the representability of $A^3$ is already known by \cite{BP}[Proposition 2.7] and \cite{SV}[part 3].

\smallskip

{\small \textbf{Acknowledgements:} The author thanks Kapil Paranjape for his constant encouragement and for carefully listening about the arguments of this paper from the author. The author thanks R.Laterveer and the anonymous referee for pointing out a mistake present in  the previous version of the paper.}

{\small Throughout this text we work over an algebraically closed ground field $k$ of characteristic zero and all Chow groups are considered with $\QQ$-coefficients.}

\section{Representability up to dimension two of codimension three cycles}
\label{section2}
Let $X$ be a smooth projective variety and let $A^i(X)$ denote the algebraically trivial, codimension $i$ algebraic cycles on $X$, modulo rational equivalence. Then we say  that $A^i(X)$ is weakly representable up to dimension two if there exists finitely many curves $C_1,\cdots,C_m$ with correspondences $\Gamma_1,\cdots,\Gamma_m$ on $C_1\times X, \cdots, C_m\times X$ and finitely many surfaces $S_1,\cdots,S_n$ with correspondences $\Gamma_j'$ on $S_j\times X$, such that
$$\sum_i \Gamma_i+\sum_j \Gamma_j'$$
is surjective from $\oplus_i A^1(C_i)\oplus _j A^2(S_j)$ to $A^i(X)$. If we assume that $X$ is a fourfold, then the representability of $A^2(X)$ is a birational invariant. This is because if we blow up $X$ to $\wt{X}$, then $A^2(\wt{X})$ is isomorphic  to $A^2(X)\oplus A^1(Z)$, where $Z$ is the center of  the blow up. Since $A^1(Z)$ is dominated by $J(\Gamma)$, for some smooth projective curve $\Gamma$, this will imply  that if $A^2(X)$ is representable up to dimension two then so is $A^2(\wt{X})$. Suppose  that $X,Y$ are birational, such that $Y$ is obtained by one blow up of $X$ and then one blow down, the we have a generically finite map from $\wt{X}$ to $Y$, which gives a surjection at the level of $A^2$. So $A^2(X)$ representable up to dimension two implies the same for $A^2(\wt{X})$, hence the same for $A^2(Y)$. Changing the role of $X,Y$, we get the reverse implication.

Similarly if we consider the representability of $A^3(X)$ up to dimension two, where $X$ is a smooth projective fourfold, then it is a birational invariant in $X$. This is because if we blow up $X$ along a surface or a curve then the blow up formula gives us
$$A^3(\wt{X})=A^3(X)\oplus A^2(S)\oplus A^1(S)$$
or
$$A^3(\wt{X})=A^3(X)\oplus A^1(C)$$
where $S$ or $C$ is the center of the blow up. So if we blow up for many times we are only adding $A^2$ of a surface or $A^1$ of a curve, so the representability up to dimension two remains.

So our main theorem in this section is the following.

\begin{theorem}
\label{theorem1}
Let $X$ be a smooth projective fourfold birational to a fourfold $X'$ fibered over a surface $S$. Assume moreover that the geometric generic fiber of the fibration $X'\to S$ satisfies the following:

(i) The motive of it is finite dimensional.
(ii) First \'etale cohomology of it is trivial with respect to $\QQ_l$ coefficients.
(iii) The second \'etale cohomology with respect to $\QQ_l$ coefficients, is spanned by divisors on it.

Then the group $A^3(X)$ is representable up to dimension two.
\end{theorem}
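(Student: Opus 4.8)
The plan is to reduce, using the birational invariance already established in this section, to the case where $X=X'$ is genuinely fibred $f\colon X\to S$ over the surface, and then to split the problem into a ``generic'' part over an open subset and a ``boundary'' part over a curve. First I would fix a curve $C\subset S$ containing the discriminant of $f$, so that over $U=S\setminus C$ the restriction $f_U\colon X_U\to U$ is a smooth projective family whose fibres are the surfaces satisfying (i)--(iii). The argument then has two movements: (1) show that $A^3(X_U)$ is representable up to dimension two by spreading out a fibrewise decomposition of the diagonal, exactly as in \cite{VG}; and (2) deduce representability of $A^3(X)$ from that of $A^3(X_U)$ by a localization argument, controlling the cycles supported over $C$. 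As the text indicates, (2) is the genuinely new step.

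For (1): hypotheses (i)--(iii) force the Albanese and the transcendental cohomology of a fibre $Y$ to vanish, so by Bloch's conjecture for surfaces with finite-dimensional motive one gets $A_0(Y)=0$, i.e. the geometric generic fibre has trivial $\CH_0$. The Bloch--Srinivas method then yields an integer $N$ and a decomposition $N\Delta_Y=Z_1+Z_2$ in $\CH_2(Y\times Y)$ with $Z_1$ supported on $\{pt\}\times Y$ and $Z_2$ supported on $Y\times D$ for a curve $D\subset Y$. Spreading this relation over $U$ produces a decomposition of $N\Delta_{X_U}$ as a sum of a cycle $\tilde Z_1$ supported on $\bcP\times_U X_U$, a cycle $\tilde Z_2$ supported on $X_U\times_U\bcD$, and a remainder supported over a proper closed curve of $U$, which I would absorb into $C$ by enlarging it. Here $\bcP\to U$ is a multisection (a surface) and $\bcD\to U$ is the family of the curves $D$ (a threefold). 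Acting on a class $\gamma\in A^3(X_U)=\CH_1(X_U)_{\mathrm{alg}}$, the $\tilde Z_1$-term factors through the Gysin restriction of the one-cycle $\gamma$ to the codimension-two surface $\bcP$, which lands in $\CH_{-1}(\bcP)=0$ and so contributes nothing; the $\tilde Z_2$-term exhibits $N\gamma$ as lying in the image of $\CH_1(\bcD)\to\CH_1(X_U)$.

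This reduces (1) to showing that $A^2(\bcD)=\CH_1(\bcD)_{\mathrm{alg}}$ is representable up to dimension two, for the threefold $\bcD$ fibred in curves over $U$. Here I would use the relative Jacobian of $\bcD\to U$: one-cycles supported over curves in $U$ are families of divisors on surfaces and contribute only $A^1$ of curves, while modulo these an algebraically trivial one-cycle is governed by the abelian scheme $J(\bcD/U)\to U$. Its contribution splits into a constant (trace) part, an abelian variety dominated by a Jacobian, and a part varying over $U$ which is captured by $A^2(U)$; both are representable up to dimension two. This is the curve-fibration analogue of \cite{VG}, now carried out over a surface rather than a curve.

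For (2), which I expect to be the main obstacle, I would run the localization sequence
$$\CH_1(X_C)\xrightarrow{\ \iota_*\ }\CH_1(X)\xrightarrow{\ j^*\ }\CH_1(X_U)\to 0,$$
whose right-exactness shows that $A^3(X)=\CH_1(X)_{\mathrm{alg}}$ is generated by a lift of $A^3(X_U)$ together with the image of $\CH_1(X_C)$. Now $X_C=f^{-1}(C)$ is a threefold fibred over the curve $C$ with the same good surface fibres; after a resolution $\widetilde{X_C}\to X_C$, which is surjective on one-cycles with $\QQ$-coefficients, Theorem~1 of \cite{VG} applies and gives that $A^2(\widetilde{X_C})=\CH_1(\widetilde{X_C})_{\mathrm{alg}}$ is representable up to dimension one. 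Hence the boundary contribution is representable, and combining with (1) yields that $A^3(X)$ is representable up to dimension two; birational invariance transports this back to the original $X$. The delicate points I anticipate fighting lie entirely in this step: that the localization sequence can be arranged compatibly with algebraic triviality, so that a lift in $\CH_1(X)$ of an algebraically trivial class on $X_U$ may itself be taken algebraically trivial up to a boundary class, and that the correspondences assembling $A^3(X_U)$ and $A^2(X_C)$ can be glued into finitely many correspondences on $S\times X$ with values in $A^2(S)$ and in Jacobians of curves.
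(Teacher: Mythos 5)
Your movement (2) --- excising a curve $C$ from $S$, running the localization sequence, and handling the boundary via \cite{VG}[Theorem 1] applied to the threefold $X_C\to C$ --- is exactly the paper's Step 3, and your opening reduction to the fibered case via birational invariance also matches. The genuine gap is in movement (1). After Bloch--Srinivas you are left with the term $\tilde Z_2$ supported on $X_U\times_U\bcD$, and you reduce the theorem to the claim that $A^2(\bcD)$ is representable up to dimension two for a threefold $\bcD$ fibred in curves over the surface $U$. That claim is of essentially the same depth as the theorem itself, and your sketch of it does not close: the kernel of $\CH_1(\bcD)\to\CH_0(\bcD_\eta)$ is the colimit, over \emph{all} curves $C'\subset U$, of the images of $\CH_1(\bcD_{C'})=\Pic(\widetilde{\bcD_{C'}})$, and each surface $\bcD_{C'}$ is itself fibred in curves over $C'$, so its $\Pic^0$ is an abelian variety of unbounded dimension as $C'$ varies. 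Representability requires \emph{finitely many} correspondences, and nothing in the relative-Jacobian/Lang--N\'eron sketch bounds this infinite family; the ``part captured by $A^2(U)$'' accounts only for vertical fibres over points of $U$, not for these horizontal divisor classes. It is symptomatic that you use hypothesis (iii) only to feed Bloch's conjecture: the explicit divisors spanning $H^2$ are precisely what is needed to avoid introducing $\bcD$ at all.

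The paper's route through this step is different and is where hypothesis (i) does its real work at the relative level. It builds the projector $\pi_2=\Delta_{Y/U}-\pi_0-\pi_4$ from a section, spreads the divisors $D_i$ spanning $H^2_{\text{\'et}}(Y_{\bar\eta},\QQ_l)$ to relative divisors $W_i$ over $U$, and applies Kimura nilpotency twice --- once over $\bar\eta$ to the homologically trivial $\rho_\eta=(\pi_2)_\eta-\sum_i[D_i\times_\eta D_i']$, and once over $U$ to the numerically trivial $\rho=\pi_2-\sum_i[W_i\times_U W_i']$ --- to get $\rho^n=0$. Then for $y\in A^3(Y)$ the computation $[W_i\times_U W_i']_*(y)=f^*f_*(y\cdot W_i)\cdot W_i'=\theta_{i*}(a_i)$ with $a_i=f_*(y\cdot W_i)\in A^2(U)$ expresses $ny$ directly in the image of the finitely many correspondences $\theta_i=\Gamma_f^t\cdot[S\times\bar W_i]$ acting on $\oplus_i A^2(S)$, with no auxiliary threefold to analyse. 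To salvage your version you would need to upgrade the first-level Bloch--Srinivas decomposition to a full one, $N\Delta_Y=\{pt\}\times Y+Y\times\{pt\}+\sum_i D_i\times D_i'+(\text{nilpotent})$, using (i) and (iii) together --- which is exactly the paper's identity $\pi_2=\sum_i W_i\times_U W_i'+\rho$.
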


\begin{proof}
Let us assume from the very beginning that the fourfold $X$ is equipped with a fibration to a smooth projective surface $S$. That is we have a fibration $X\to S$. Let $\eta=\Spec(k(S))$, and $\bar{\eta}=\Spec(\overline{k(S)})$. Let $b_2$ be the dimension of $H^2_{\'et}(X_{\bar{\eta}},\QQ_l)$ and let by our assumption $D_1,\cdots,D_{b_2}$ be the divisors on $X_{\bar{\eta}}$, generating the second \'etale cohomology group $H^2_{\'et}(X_{\bar\eta},\QQ_l)$. Let us consider a finite extension $L$ of $k(S)$, inside its algebraic closure such that $D_1,\cdots,D_{b_2}$ are defined over $L$. That is we consider a smooth projective curve $S'$ mapping finitely onto $S$ with function field $L$, such that $X'=X\times _S S'\to X$ is of finite degree and $D_1,\cdots,D_{b_2}$ are defined over the generic point of $S'$. Since $X'\to X$ is finite we can work with this divisors which are actually defined over the generic point of $S'$.

Now we need the lemma.

\begin{lemma}
Let $X$ be a smooth projective fourfold over a field $k$ and let $A^3(X)=V\oplus W$, where $V$ is a finite dimensional $\QQ$ vector space. Then $A^3(X)$ is representable if and only if there exists finitely many smooth curves and surfaces $C_1,\cdots,C_m,S_1,\cdots,S_n$, and correspondences $\Gamma_i$ on $C_i\times X$, and $\Gamma_j'$ on $S_j\times X$ such that the homomorphism $\sum_i \Gamma_i+ \sum_j \Gamma_j'$ from $\oplus_i A^1(C_i)\oplus_j A^2(S_j)$ to $A^3(X)$ is surjective onto $W$.
\end{lemma}

\begin{proof}
Let $v_1,\cdots,v_n$ be a basis for $V$. For each $v_j$ let $Z_j$ be the algebraical cycle representing it. Since $Z_j$ is algebraically equivalent to zero, we have a smooth projective curve $C_j$ and a correspondence $\Gamma_j$ such that $\Gamma_{j_*}(x_j)$ equals $Z_j$, where $x_j$ is a point on $J(C_j)$. Therefore the homomorphism $\sum_j\Gamma_{j*}$ is covering the space $V$ and it has domain $\oplus J(C_j)$. So to prove that $A^3(X)$ is representable it is enough to prove the representability of $W$. So we need to find some smooth curves and surfaces satisfying the assumption that the sum of algebraically trivial zero cycles on these curves and surfaces cover $W$.
\end{proof}
\medskip

step{2}:

\smallskip

Let $\{p_1,\cdots,p_m\}$ be a finite set of closed points on $S$. Let $U$ be the complement of this finite set. Let $Y= f^{-1}(U)$. Then by the localization exact sequence we have that
$$\oplus_j\CH^2(X_{p_j})\to \CH^3(X)\to \CH^3(Y)\to 0$$
so  the $\QQ$ vector space $\CH^3(X)$ splits as $\CH^2(Y)\oplus I$
where $I$ is the image of the pushforward from $\oplus_j \CH^2(X_{p_j})$ to $\CH^3(X)$. It is also true that the map from $A^3(X)$ to $A^3(Y)$ is surjective, where $A^3$ denote the algebraically trivial one-cycles modulo rational equivalence. So we have a splitting
$$A^3(X)=A^3(Y)\oplus J$$
where $J$ is the intersection of $I$ and $A^3(X)$. Let for $X_{p_j}$, $\wt{X_{p_j}}$ is the resolution of singularity of it. Then we have that $J$ is covered by two subspaces, one is the direct sum of $A^2(X_{p_j})$, which is covered by direct sums of the $A^2$'s of the irreducible components of $\wt{X_{p_j}}$, the other is a finite dimensional subspace, coming from the Neron severi group of the irreducible components of the resolutions of $\wt{X_{p_j}}$. So by the previous lemma it is sufficient to prove that $A^3(Y)$ is representable up to dimension two to prove the representability of the group $A^3(X)$.

\medskip

step 3:

\medskip

Let $C$ be a projective curve inside $S$, and we excise $C$ from $S$. Let $Y$ be the complement of $X_C=X\times _S C$ in $X$. Then we prove that the representability of $A^3(X)$, follows from the representability of $A^3(Y)$. For that we consider the localisation exact sequence given by
$$\CH^2(X_C)\to \CH^3(X)\to \CH^3(Y)\to 0\;.$$
Then we have $\CH^3(X)=\CH^3(Y)\oplus I$, where $I$ is the image of $\CH^2(X_C)$ in $\CH^3(X)$. Considering the subgroup of algebraically trivial cycles we get that
$$A^3(X)=A^3(Y)\oplus J$$
where $J$ is the intersection of $I$ with  the image of $A^3(X)$. Then $J$ is a sum of two $\QQ$-vector spaces. One is the image of $A^2(X_C)$ and the other is a finite dimensional subspace corresponding to the Neron-Severi group of $X_C$. Then by step one if we have $A^2(X_C)$ is representable then we have the representability of $J$. But the representability of $A^2(X_C)$  follows from \cite{VG}[Theorem 1]. Because according to our assumption the geometric generic fiber of $X\to S$ has finite dimensional motive and base change of finite dimensional motive is finite dimensional. Therefore the geometric generic fiber of $X_C\to C$ has finite dimensional motive. Also the first and second etale cohomology of the geometric generic fiber of $X_C\to C$ satisfies the assumption of \cite{VG}[Theorem 1], because the geometric generic fiber of $X\to S$ satisfies the similar properties. Therefore we have the representability of $A^3(X)$ follows from that of $A^3(Y)$. So we can say that to prove representability of $A^3(X)$ it is sufficient to remove a finitely many curves from the base, and look for the representability of the $A^3(Y)$, where $Y$ is he complement of $\cup_i X_{C_i}$.

step 4:

\smallskip

Suppose that $X_{\eta}$ is defined over a finite extension $L$ of $k(S)$ inside $\overline{k(S)}$. Then let $S'$ be a smooth projective surface with function field $L$, and mapping finitely onto $S$. Now over $S'$ we have a rational point of the variety $X_{\eta}'=X_{\eta}\times_{k(S)} S'$. This rational point induces a section of the map $Y\to U$, over some $U'$ Zariski open inside $U$.  Now $U'$ maps isomorphically onto its image in $Y$. So we have to remove a curve from $U$ to obtain $U'$. Since the representability remains unchanged by this process, we can assume without loss of generality that the section is defined everywhere on $U$.  So without loss of generality we can assume that $Y\to U$ has a section. Let $E$ be the image of this section. Then $E.E$ has codimension $4$ in $Y$, so it's support is contained in finitely many fibers. So we can cut down those finitely many fibers. Then we can prove that  $\pi_0=E\times _U Y,  \pi_4=Y\times _U E$ are pairwise orthogonal \cite{VG}[page 332, reduction 4]. Hence we have the projector
$$\pi_2=\Delta_{Y/U}-\pi_0-\pi_4\;.$$

Let $M^2(Y/U)$ be the relative motive defined by $\pi_2$. Then we have the decomposition
$$M(Y/U)=\mathbb {1}_U\oplus M^2(Y/U)\oplus \mathbb {L}^2_U$$

Now we know that $M(X_{\bar\eta})$ is finite dimensional, which means at the level of Chow groups that there exists some correspondence $p,q$ on $X_{\bar\eta}$ such that $d_{\Sym}\circ p^n$ is rationally trivial and $d_{alt}\circ q^n$ is rationally trivial. Let $L$ be the minimal field of definition of $p,q$, then taking a finite extension $S'$ over $S$, with function field $L$, we have $M(Y_{\eta})$ is finite dimensional over $\eta$ itself. On the other hand since $\CH^2(Y_{\eta}\times Y_{\eta})$ is the colimit of the groups $CH^2(Y_U\times_U Y_U)$, we have that the motive $M(Y/U)$ is finite dimensional for some open set $U$ in $S$. Then we shrink our $U$ to this $U$ by taking intersection.

Now the finite dimensionality of $M(Y/U)$ implies $M^2(Y/U)$ is finite dimensional. One can show more, that is $M^2(Y/U)$ is evenly finite dimensional of dimension $b_2$. This follows from the computation of \cite{VG}[\textit{Main computations}, page 333].

Now let $D_1,\cdots,D_{b_2}$ be the divisors defined over $\eta$ and they generate the cohomology group $H^2(Y_{\eta},\QQ_l)$. According to \cite{VG}[page 334],\cite{GP}[theorem 2.14] we have

$$\rho_{\eta}=(\pi_2)_{\eta}-\sum_{i=1}^{b_2}[D_i\times_{\eta}D_i']$$
is homologically trivial. Then there exists some $n$ such that $\rho_{\eta}^n=0$, in the associative ring $End(M^2(Y_{\eta}))$, by Kimura's nilpotency theorem \cite{KI}[proposition 7.2].

Let $W_i,W_i'$ are spreads  of the above divisors over $U$, they may be non-unique but we choose and fix one spread. Consider the cycles
$$W_i\times _U W_i'$$
in $Corr^0_U(Y\times_U Y)$ and set
$$\rho=\pi_2-\sum_{i=1}^{b_2} [W_i\times_U W_i']$$
then $\rho$ maps to $\rho_{\eta}$ under the base change functor from the category of relative Chow motives over $U$ to the category of Chow motives over $\eta$. Let us consider an endomorphism $\omega$ of $M^2(Y/U)$. Then under the above functor trace of $\omega\circ \rho$ is mapped to trace of $\omega_{\eta}\circ \rho_{\eta}$ \cite{VG}[page 334], \cite{DM}[page 116], which is zero because $\rho_{\eta}$ is homologically trivial. The base change functor defines an isomorphism from $End(\mathbb 1_U)$ to $End(\mathbb 1_{\eta})$. Therefore trace of $\omega\circ \rho=0$ for any $\omega$, so $\rho $ is numerically trivial, therefore $\rho^n=0$ by \cite{VG}[Proposition 2], \cite{KI}[7.5],\cite{AK}[9.1.14].

Let $\bar{W_i}$ be the Zariski closure of $W_i$ in $X$ and consider
$$\theta_i=\Gamma^t_f.[S\times \bar{W_i}]$$
it is a codimension $3$ cycle on $S\times X$. The cycle $\Gamma^t_f$ is the transpose of the graph of the map $f:X
\to S$. Consider the homomorphism $\theta_{i*}$ from $\CH^2(S)$ to $\CH^3(X)$. Let us compute $\theta_{i*}$.

$$\theta_{i*}(a)=p_{X*}(p_S^*(a).\theta_i)$$
which is equal to
$$\theta_{i*}(a)=p_{X*}(p_S^*(a).\Gamma^t_f.[S\times \bar{W_i}])$$
on the other hand we have $p_S^*(a).\Gamma^t_f=p_S^*(a).\tau_*([X])$, where $\tau $ is the map $x\mapsto (f(x),x)$.
We have $f^*(a)=\tau^*p_S^*(a)=\tau^*p_S^*(a).[X]$
therefore $\tau_*f^*(a)=\tau_*(\tau^*p_S^*(a).[X])$, which by projection formula is $p_S^*(a).\tau_*(X)=p_S^*(a).\Gamma^t_f$. Putting this in the above expression of $\theta_{i*}$ we have
$$\theta_{i*}(a)=p_{X*}(\tau_*f^*(a).[C\times \bar{W_i}])$$
$$=p_{X*}(\tau_*f^*(a).p_X^*([\bar{W_i}]))=p_{X*}\tau_*f^*(a).[\bar{W_i}]=f^*(a).[\bar{W_i}]\;.$$
So this computation provides the description of the homomorphism $\theta_{i*}$ in the non-compact case when we consider it from $\CH^2(U)$ to $\CH^3(Y)$. It is immediate that the homomorphisms $\theta_{i*}$'s are compatible in compact and non-compact cases. Since the homomorphism $\theta_{i*}$ in  the non-compact case respects algebraic equivalence we have the compatibility at the level of algebraically trivial cycles modulo rational equivalence. So summarising we have a commutative diagram as follows.

$$
  \diagram
\sum_{i=1}^{b_2} A^2(S)\ar[dd]_-{} \ar[rr]^-{\theta_*} & & A^3(X) \ar[dd]^-{} \\ \\
\sum_{i=1}^{b_2}  A^2(S) \ar[rr]^-{\theta_*} & & A^3(Y)
  \enddiagram
  $$
Chasing the above diagram and assuming that the bottom $\theta_*$ is surjective we have that the top $\theta_*$ has image equal to $A^3(X)$ modulo $A^2(X_C)$, where $C$ is the complement of $U$ in $S$ and $X_C=f^{-1}(C)$. Since $A^2(X_C)$ is finite dimensional it is enough to prove that $\theta_*$ at the bottom is onto to prove the representability of $A^3(X)$ up to dimension $2$.

Let $y$ belongs to $\CH^3(Y)$, then considering the relative correspondence $\Delta_{Y/U}$, we get that
$$y=\Delta_{Y/U*}(y)=\pi_{0*}(y)+\pi_{2*}(y)+\pi_{4*}(y)\;.$$

Now $\pi_{0*}(y)$ is equal to $p_{2*}(p_1^*(y).\pi_0)$ which is equal to $p_{2*}(p_1^*(y).p_1^*(E))=p_2^*p_1^*(y.E)=f^*f_*(y.E)=0$ as the codimension of $y.E$ is five. So we have $\pi_{0*}(y)=0$. Also we have $f_*(y)=0$.

Next we compute,
$$\pi_{4*}(y)=p_{2*}(p_1^*(y).\pi_4)$$
$$=p_{2*}(y\times_U Y.Y\times_U E)=p_{2*}(y\times _U E)$$
$$=f_*(y)\times_U E=0\;.$$

So we have that $y=(\pi_{2*})(y)\;.$
Putting $\pi_2$ equal to $\sum_i [W_i\times_U W_i']+\rho$
we get that $y=\pi_{2*}(y)=\sum_i [W_i\times_U W_i']_*(y)+\rho_*(y) $. Let $Z_j$'s are curves representing the class of $y$, then
$$[W_i\times_U W_i']_*(Z_j)=p_{2*}([Z_j\times_U Y].[W_i\times_U W_i'])$$
$$=p_{2*}([Z_j].[W_i]\times_U [Y].[W_i'])=p_{2*}([Z_j].[W_i]\times [W_i'])$$
by linearity we have
$$[W_i\times_U W_i'](y)=p_{2*}(y.[W_i]\times_U [W_i'])$$
since $y$ is of codimension $3$ and $W_i$ is of codimension $1$, we have $y.W_i$ is a zero cycle on $Y$.
Observe that
$$[W_i\times_U W_i']_*(y)=p_{2*}(y.W_i\times_U W_i')$$
$$=p_{2*}(p_1^*(y.W_i).p_2^*(W_i'))=p_{2*}p_1^*(y.W_i).W_i'$$
$$=f^*f_*(y.W_i).W_i'=f^*(a_i).W_i'=\theta_{i*}(a_i)$$
where $a_i=f_*(y.W_i)$. Since $y$ belongs to $A^3(Y)$, we have hat $a_i$ is in $A^2(U)$. Then we get that
$$\sum_i[W_i\times_U W_i']_*(y)=\sum_i \theta_{i*}(a_i)=\theta_*(e_1)$$
where $c_1=(a_1,\cdots,a_{b_2})$ in $\oplus_i A^2(S)$. So we have
$$\rho_*(y)=\theta_*(c_1)+y$$ applying $\rho$ n-times we have that
$$\rho^n_*(y)=0=\theta_*(c_n)+ny$$
so we have $y=-1/n \theta_*(c_n)$, hence $\theta_*$ is surjective.

\end{proof}

\begin{remark}
It is  interesting to note that one of the conditions in Theorem \ref{theorem1} is the motivic finite dimensionality of the geometric generic fiber $X_{\bar\eta}$. Suppose that the ground field is $\CC$. Consider $X_{\bar\eta}$ over $\overline{\CC(S)}$. Then the motivic finite dimensionality is also equivalent to Bloch's conjecture for the geometric generic fiber if the geometric genus of the fiber is zero, see \cite{GP1}[Theorem 27]. Recall that, universal triviality of the Chow group of zero cycles on the surface $X_{\bar \eta}$ defined over the algebraically closed field $\overline {k(S)}$ is
$$\CH_0({X_{\bar\eta}}_{(\overline{k(S)})(X_{\bar\eta})})\cong \ZZ\;.$$
This is equivalent, \cite{ACP}[proof of Lemma 1.3], to the integral decomposition of the diagonal $\Delta_{X_{\bar\eta}}\subset X_{\bar \eta}\times X_{\bar \eta}$, which says that
$$[\Delta_{X_{\bar\eta}}]=[Z_1]+[Z_2]$$
in $\CH^2(X_{\bar\eta}\times X_{\bar\eta})$,
where $[.]$ denote the cycle class modulo rational equivalence in the Chow group, $Z_1$ is supported on $D\times X_{\bar\eta}$, $D\subsetneq X_{\bar \eta}$ and $Z_2=X_{\bar\eta}\times x$ for a $\overline{k(S)}$-point $x$ on $X_{\bar\eta}$.                                                                                                            Let us now consider the case $k=\CC$. Suppose that the geometric genus of the surface ${X_{\bar\eta}}_{\CC}$, obtained under the extension of scalars from $\overline{\CC(S)}$ to $\CC$, is zero. Suppose also that the torsion in the Neron-Severi group of $X_{\bar\eta}$ is trivial. Since torsion in the Neron-Severi group remains unchanged by the extension of scalars from $\overline{\CC(S)}$ to $\CC$ (this follows from rigidity of unramified cohomology groups as studied in \cite{C}[Section 4.4]) we have, the torsion in the Neron-Severi group of ${X_{\bar\eta}}_{\CC}$ is trivial. Now motivic finite dimensionality of $X_{\bar\eta}$ implies motivic finite dimensionality of ${X_{\bar\eta}}_{\CC}$. This further implies the Bloch's conjecture on ${X_{\bar\eta}}_{\CC}$ with the assumption on the geometric genus of ${X_{\bar\eta}}_{\CC}$. The Bloch's conjecture on ${X_{\bar\eta}}_{\CC}$ with the triviality of the torsion subgroup in the Neron-Severi group of ${X_{\bar\eta}}_{\CC}$ implies the universal triviality of the Chow group of zero cycles on $X_{\bar \eta}$, see \cite{ACP}[Proposition 1.9 + Corollary 1.10], \cite{BS}[Remark 2, page 1252], \cite{GG}[Corollary 8], \cite{Vo}[Corollary 2.2].

 On the other hand, integral decomposition of the diagonal, gives Bloch's conjecture on $X_{\bar\eta}$ by Chow moving lemma (also the geometric genus of $X_{\bar\eta}$ is zero in this case by the Proposition 1.8 in \cite{ACP}) and hence we obtain the finite dimensionality of the motive $M(X_{\bar\eta})$. Therefore for surfaces $X_{\bar\eta}$ over $k=\overline{\CC(S)}$, with ${X_{\bar\eta}}_{\CC}$ being of geometric genus zero, and $\NS(X_{\bar\eta})_{Tors}=\{0\}$ (or equivalently $\NS({X_{\bar\eta}}_{\CC})=\{0\}$), the motivic finite dimensionality condition in  Theorem \ref{theorem1} can be replaced by the integral decomposition of the diagonal of $X_{\bar \eta}$ or the universal triviality of $\CH_0$ of $X_{\bar\eta}$.

Now, let $X$ be a smooth projective variety defined over the field of complex numbers with universally $\QQ$-trivial Chow group of zero cycles, that is
$$\CH_0({X}_{\CC(X)})\otimes_{\ZZ}\QQ\cong \QQ\;.$$
It follows by \cite{BS}[Proposition 1] that the cycle class of the diagonal $\Delta_X$ in $\CH^2(X\times X)$ admits a rational decomposition, that is
$$N[\Delta_X]=[Z_1]+[Z_2]$$
in $\CH^2(X\times X)$, where $[.]$ denote the class of a cycle modulo rational equivalence in the Chow group. Here $Z_1$ is supported on $D\times X$, $D\subsetneq X$ and $Z_2$ is $m(X\times x)$, for a $\CC$-point $x$ on $X$ and $m$ an integer. Let $N_X$ be the smallest positive integer for which the above mentioned decomposition of the diagonal holds. This integer $N_X$ is defined as the torsion order of the variety $X$.

Let $X\to S$ be a smooth, projective fourfold fibred into surfaces over the surface $S$ as before. Suppose that $\CH_0$ of $X_{\bar\eta}$ is universally $\QQ$-trivial. Now by the main theorem of \cite{K} [Proposition 3.3 and Corollary 6.4], \cite{KS}[Remark 3.1.5 (3)], it follows that the torsion order of the surface $X_{\bar\eta}$ is the exponent of the torsion subgroup in the Neron-Severi group of $X_{\bar\eta}$. Suppose that the torsion subgroup in the Neron-Severi group of $X_{\bar\eta}$ is trivial, then we have the universal ($\ZZ$)-triviality of the Chow group of zero cycles of $X_{\bar\eta}$. This fact is well-known and proven in \cite{ACP}[Proposition 1.8 and 1.9]. Therefore universal $\QQ$-triviality of $\CH_0$ of $X_{\bar\eta}$ with the information that the torsion subgroup of the Neron-Severi group of $X_{\bar\eta}$ is trivial give integral decomposition of the diagonal, which further implies, by the above discussion, the motivic finite dimensionality of $X_{\bar\eta}$. Hence the assumption of universal $\QQ$-triviality along with torsion free Neron-Severi group of $X_{\bar\eta}$ is stronger than the motivic finite dimensionality assumption for the geometric generic fiber in the main theorem \ref{theorem1}. Also note that in the case of universal $\QQ$-triviality of $\CH_0$ of $X_{\bar\eta}$, the geometric genus of the surface $X_{\bar\eta}$ is zero by the result of Bloch-Srinivas, \cite{BS}.
\end{remark}

\begin{example}
Let $X$ be a smooth, projective fourfold over $\CC$ fibered into del Pezzo surfaces over a smooth, projective surface, then the geometric generic fiber of this fibration satisfies the conditions of the Theorem \ref{theorem1}. Therefore $A^3(X)$ is representable up to dimension $2$. Such examples have been studied in \cite{AHTV}, where a generic cubic fourfold containing a sextic, elliptic, ruled surface is shown to be birational to a del Pezzo surface fibration over the projective plane. General del Pezzo fibrations are studied in detail in \cite{Ku}. Also it is to be mentioned that representability of $A^3$ up to dimension $2$ is known for all cubic fourfolds.

Other examples of fourfolds fibered in del Pezzo surfaces are quadric surface bundles over surfaces and involution surface bundles over surfaces. The representability of $A^3$ up to dimension $2$ holds for these examples as the geometric generic fibers of the fibrations mentioned above are del Pezzo surfaces and satisfy the condition of Theorem \ref{theorem1}. The first family of examples are studied in \cite{HPT} and the authors exhibit families of this type, having both rational and non-stably rational fibers and further showing that stable rationality for these families are non-deformable. The second family of examples are studied in \cite{KT}, \cite{KT1}. In both of these two types of examples, the authors prove that families of such fourfolds have their very general fiber (hence the geometric generic fiber) not stably rational, though the representability of $A^3$ up to dimension $2$ holds for the geometric generic fibers.

Consider a smooth, projective fourfold $X$ over $\CC$ fibered into Barlow surfaces over a smooth, projective surface, then we have the criterion of Theorem \ref{theorem1} satisfied, as Bloch's conjecture is true for Barlow surfaces \cite{V}, the motive of the geometric generic fiber of this fibration, is finite dimensional. Therefore we have $A^3(X)$ representable up to dimension $2$. Examples of such fibration can be constructed  from the universal determinental Barlow surface over the moduli space of determinental Barlow surfaces which is two dimensional. This universal family of Barlow surfaces can be constructed as a quotient of a family of certain Catanese surfaces admitting an involution. For such examples please see \cite{S}, \cite{V}[Introduction, discussion after theorem 0.6].

\end{example}


\begin{thebibliography}{AAAAAAA}
\bibitem[AHTV]{AHTV} N.Addington, B.Hassett, Y.Tschinkel, A.Varilly-Alvarado, {\em Cubic fourfolds fibered into sextic del pezzo surfaces}, https://arxiv.org/pdf/1606.05321.pdf, 2016.
\bibitem[AK]{AK}Y.Andre, B.Kahn, (with appendix by P.O'Sullivan) {\em Nilpotence, radicaux et structure monoidales}, Rend. Sem. Math. Univ. Padova 108(2002) 107-291.
\bibitem[ACP]{ACP} A.Auel, J.Colliot-Thelene, R. Parimala, {\em Universal unramified cohomology of cubic fourfolds containing a plane}, Brauer groups and obstruction problems, A.Auel, B.Hassett, A.Varilly-Alvarado, B.Viray (editors), Progress in Mathematics, Volume 320, Birkhauser.
\bibitem[BKL]{BKL}S.Bloch, A.Kas, D.Lieberman, {\em Zero cycles on surfaces with $p_g=0$}, Compositio Mathematicae, tome 33, no 2(1976), page 135-145.
\bibitem[BS]{BS} S. Bloch, V.Srinivas, {\em Remarks on correspondences and algebraic cycles}, American Journal of Mathematics, Volume 105, No. 5, 1983, 1235-1253.
\bibitem[BP]{BP} M. Bolognesi and C. Pedrini, {\em Transcendental motive of a cubic fourfold}, https://arxiv.org/pdf/1710.05753.pdf
\bibitem[C]{C} J.Colliot-Thelene, {\em Birational invariants, purity and the Gersten conjecture}, Lectures at the 1992 AMS Summer School, Santa Barbara, California.


\bibitem[DM]{DM}P.Deligne, J.Milne, {\em Tannakian categories}, In Hodge cycles and Shimura varieties, Lecture notes in Math. 900, Springer, 1982, 101-208
\bibitem [G]{VG} V.Guletskii, {\em On the continuous part of codimension two algebraic cycles on threefolds over a field}, Math Sbornik, Volume 200, number 3.
\bibitem[GG]{GG} V.Guletskii, S.Gorchinsky, {\em Transcendence degree of zero cycles and structure of Chow motives}, Central European Journal of Mathematics, 10, 2012, 559-568.
\bibitem[GP]{GP} V.Guletskii, C.Pedrini, {\em The Chow motive of the Godeaux surface}, Algebraic Geometry, a volume in the memory of Paolo Francia, Walter De Gruyter, 2002, 179-195.
\bibitem[GP1]{GP1} V.Guletskii, C.Pedrini, {\em Finite dimensional motives and conjectures of Beilinson and Murre}, Special issue in the honor of Hyman Bass on his seventieth birthday, part III, K-Theory, 30, 2003, no. 3, 243-263.
\bibitem[HPT]{HPT} B.Hassett, A. Pirutka, Y.Tschinkel, {\em Stable rationality of quadric surface bundles over surfaces}, Acta Math., Volume 220, Number 2 (2018), 341-365.
\bibitem[K]{K} B.Kahn, {\em Torsion order of smooth projective surfaces, (with an appendix by J.L.Colliot-Thelene)}, Comment. Math. Helv., 92, 2017, 839-857.
\bibitem[KS]{KS} B.Kahn, R.Sujatha, {\em Pure birational motives I}, Annals of K-theory, 1, 2016, 379-440.
\bibitem[KI]{KI}S.Kimura, {\em Chow groups are finite dimensional in some sense}, Math.Ann., 331 (2005), no.1, 173-201.
\bibitem[KT]{KT} A.Kresch, Y.Tschinkel, {\em Involution surface bundles over surfaces}, Math. Zeitschrift, 296, 1081-1100, 2020.
\bibitem[KT1]{KT1} A.Kresch, Y. Tschinkel, {\em Brauer groups of involution surface bunles}, online preprint.
\bibitem[Ku]{Ku} A.Kuznetsov, {\em Derived categories of families of Sextic del Pezzo surfaces}, IMRN, https://doi.org/10.1093/imrn/rnz081, 2019.



\bibitem[M]{M} D.Mumford, {\em Rational equivalence for $0$-cycles on surfaces.}, J.Math Kyoto Univ. 9, 1968, 195-204
\bibitem[R]{R} A.A.Roitman, {\em The torsion of the group of $0$-cycles modulo rational equivalence}, Ann. of Math. (2) 111 (1980), no. 3, 553-569.


\bibitem[SV]{SV} M.Shen, C.Vial, {\em Fourier transform of certain hyperkahler fourfolds}, Memoirs of the AMS, 2015.
\bibitem[S]{S}P.Supino, {\em A note on Campedelli surfaces}, Geometrie Dedicatae, 71, 1998, no.1, 19-31.

\bibitem[V]{V}C.Voisin, {\em Bloch's conjecture for Catanese and Barlow surfaces}, Journal of Differential Geometry, 2014, no.1, 149-175.
\bibitem[VC]{VC} C.Voisin, {\em Sur les zero cycles de certaines hypersurfaces munies d'un automorphisme}, Ann. Scuola Norm. Sup. Pisa Cl. Sci., (4), 19, 1992, no.4, 473-492.
\bibitem[Vo]{Vo} C.Voisin, {\em Universal triviality of $\CH_0$ of cubic hypersurfaces}, Journal of European Math. Society, 19, 1619-1653, 2017.
\end{thebibliography}
\end{document}